\documentclass{article}
\usepackage[utf8]{inputenc}
\usepackage{amsmath,amsthm,amssymb}
\usepackage{tikz} \usetikzlibrary{positioning,shapes.geometric}
\usepackage{graphicx}
\usepackage{hyperref}

\def\firstellip{(1.6, 0) ellipse [x radius=3cm, y radius=1.5cm, rotate=50]}
\def\secondellip{(0.3, 1cm) ellipse [x radius=3cm, y radius=1.5cm, rotate=50]} \def\thirdellip{(-1.6, 0) ellipse [x radius=3cm, y radius=1.5cm, rotate=-50]} \def\fourthellip{(-0.3, 1cm) ellipse [x radius=3cm, y radius=1.5cm, rotate=-50]} 

\newtheorem{thm}{Theorem}[section]

\newtheorem{lem}[thm]{Lemma}

\renewcommand{\Box}{\mathbin{\text{\scalebox{.84}{$\square$}}}}

\title{Cliques in realization graphs}
\author{Michael D. Barrus\\
Department of Mathematics and Applied Mathematical Sciences\\
University of Rhode Island\\
Kingston, RI 02881\\
\url{barrus@uri.edu}\\[12pt]
and\\[12pt]
Nathan Haronian\\
Department of Mathematics\\
Brown University\\
Providence, RI 02912\\
\url{nathan_haronian@brown.edu}
}
\date{}

\begin{document}

\maketitle

\begin{abstract}
    The realization graph $\mathcal{G}(d)$ of a degree sequence $d$ is the graph whose vertices are labeled realizations of $d$, where edges join realizations that differ by swapping a single pair of edges. Barrus [On realization graphs of degree sequences, Discrete Mathematics, vol. 339 (2016), no. 8, pp. 2146-2152] characterized $d$ for which $\mathcal{G}(d)$ is triangle-free. Here, for any $n \geq 4$, we describe a structure in realizations of $d$ that exactly determines whether $\mathcal{G}(d)$ has a clique of size $n$. As a consequence we determine the degree sequences $d$ for which $\mathcal{G}(d)$ is a complete graph on $n$ vertices.
\end{abstract}

\section{Introduction}
In this paper we discuss degree sequences of finite simple graphs. Such a degree sequence $d=(d_1,\dots,d_n)$ typically is realized by several graphs; here we consider these realizations as labeled graphs on a common vertex set $V=\{v_1,\dots,v_n\}$ in which the degree of vertex $v_i$ is necessarily $d_i$ for all $i \in \{1,\dots,n\}$.

It is natural to wonder about relationships between realizations of a degree sequence. One structure that encodes some of these relationships is the \emph{realization graph} $\mathcal{G}(d)$, which is the focus of this paper. In this graph the vertices are the labeled realizations of $d$. Any two vertices $H$ and $J$ are adjacent if the graphs $H$ and $J$ can be obtained from each other by a single modification of edge sets called a \emph{2-switch}, which we now define.

Given a graph $H$, an \emph{alternating 4-cycle} is a configuration involving four vertices $u,v,w,x$ in which $uv$ and $wx$ are edges and $ux$ and $vw$ are not edges in $H$. Representing non-edges by dotted lines, Figure~\ref{fig: A4} shows why this configuration has its name. Note that the definition does not impose any requirement about the ``diagonal'' vertex pairs $\{u,w\},\{v,x\}$. We denote such an alternating 4-cycle by $[u,v: w,x]$.
\begin{figure}
    \centering
    \includegraphics{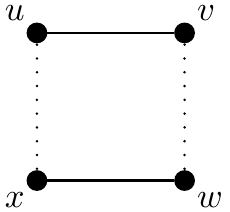}
    \caption{An alternating 4-cycle $[u,v: w,x]$.}
    \label{fig: A4}
\end{figure}

Suppose that a graph $H$ has degree sequence $d$. A \emph{2-switch} is an operation performed on an alternating 4-cycle $[u,v: w,x]$ in $H$: we delete the edges $uv,wx$ from the graph and add edges $ux,vw$. In this way the adjacencies between consecutive vertices in the alternating 4-cycle are each toggled, leaving an alternating 4-cycle $[v,w : x,u]$. Letting $J$ denote the graph after the 2-switch on $H$, observe that each vertex has the same degree in $J$ as in $H$. By our definition, $H$ and $J$ are adjacent in the realization graph $\mathcal{G}(d)$.

In this way the realization graph is the ``reconfiguration graph'' for the operation of a 2-switch on the realizations of a graph. See~\cite{Nishimura18} for survey of reconfiguration questions, of which there are many.

Figure~\ref{fig: G22211} displays an example of a realization graph. Here the graph shown is $\mathcal{G}((2,2,2,1,1))$, with the white vertex corresponding to the unique realization isomorphic to $K_3+K_2$, and the black vertices corresponding to the realizations isomorphic to a path. In this realization graph, the white vertex is adjacent to all the other vertices because for each of the six labeled path realizations, there is a 2-switch possible on the labeled $K_3+K_2$ that yields the given path.
\begin{figure}
    \centering
    \includegraphics[height=1.2in]{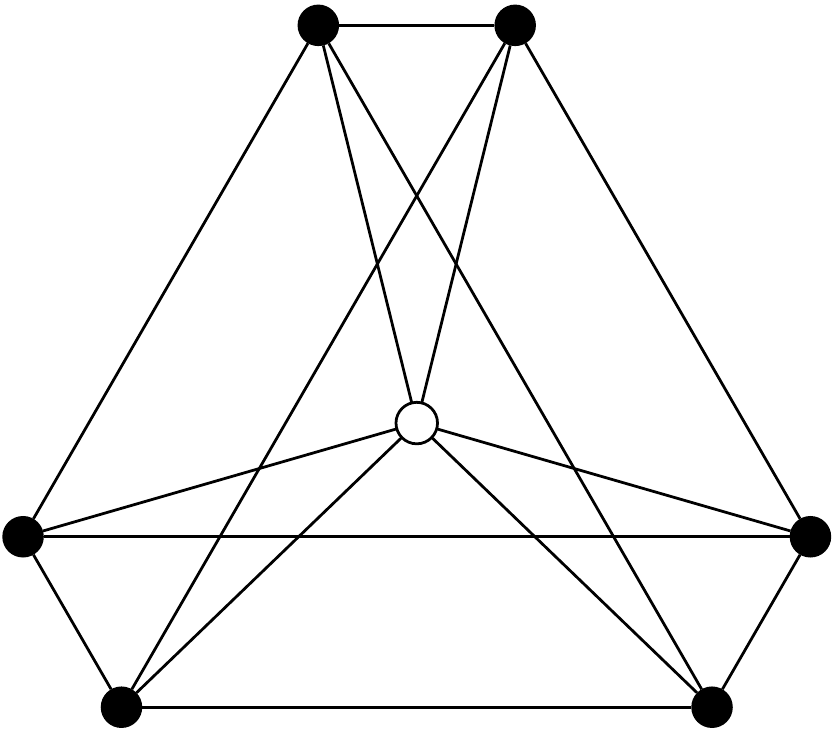}
    \caption{The realization graph $\mathcal{G}((2,2,2,1,1))$ and $\mathcal{G}((3,3,2,2,2))$}
    \label{fig: G22211}
\end{figure}

A classic result discovered or hinted at independently by many authors (for example, see~\cite{FHM65,Hakimi63,Petersen91,Senior51}) states that any two labeled graphs with the same degree sequence have the property that one can be iteratively transformed into the other by a finite sequence of 2-switches. This implies that $\mathcal{G}(d)$ is connected for all $d$.

Another simple result concerns complements. The graph in Figure~\ref{fig: G22211} is also the realization graph of $\mathcal{G}((3,3,2,2,2))$. This is because $(2,2,2,1,1)$ and $(3,3,2,2,2)$ are degree sequences of graphs that are complements of each other. In general, when the complement of a graph is taken, an alternating 4-cycle $[u,v:w,x]$ gives rise to an alternating 4-cycle $[v,w:x,u]$ in the resulting graph, and 2-switches performed on these alternating 4-cycles produce graphs that are again complementary. For this reason, if realizations $H$ and $J$ of a degree sequence $d$ are adjacent in $\mathcal{G}(d)$, then the complements of $H$ and $J$ will be adjacent in the realization graph of their ``complementary'' degree sequence. It follows that the degree sequences $d=(d_1,\dots,d_n)$ and $\overline{d}=(n-1-d_n,\dots,n-1-d_1)$ have the same realization graph, up to isomorphism.

Perhaps of the earliest mention of realization graphs of degree sequences appears in the paper~\cite{EggletonHolton79} by Eggleton and Holton. (Around the same time, Brualdi~\cite{Brualdi80} introduced the \emph{interchange graphs} for 0-1 matrices with prescribed row and column sums; Arikati and Peled~\cite{ArikatiPeled99} noted that realization graphs of degree sequences of split graphs are equivalent to interchange graphs of suitably chosen matrices.) In~\cite{ArikatiPeled99}, the question is raised of whether realization graphs all have a hamiltonian path or cycle; at present this is still an open question.

In~\cite{Barrus16}, Barrus showed that the realization graph $\mathcal{G}(d)$ is the Cartesian product of the realization graphs of the degree sequences that make up $d$ in a decomposition due to Tyshkevich~\cite{Tyshkevich00}. 

To preface the main question of this paper, we recall some definitions and a result. A \emph{clique} in a graph is a set of vertices that are pairwise adjacent, and a \emph{triangle} is a complete subgraph having three vertices. In~\cite{Barrus16}, Barrus touched on the notion of small cliques in realization graphs by characterizing the triangle-free realization graphs $\mathcal{G}(d)$ and the corresponding degree sequences $d$. Restating part of the analysis there, we have the next theorem. Here a \emph{configuration} refers to a triple $(W,F,F')$ where $W$ is a vertex set and $F$ and $F'$ are disjoint sets of pairs $\{u,v\}$ where $u,v \in W$. For a graph $H$ to \emph{contain} a configuration $(W,F,F')$ means that there exists an injective map $f:W \to V(H)$ carrying elements of $F$ to edges of $H$ and elements of $F'$ to non-edges in $H$.

\begin{thm}[\cite{Barrus16}, Theorem 9] \label{thm: triangles}
    For any degree sequence $d$ and realization $H$ of $d$, the vertex $H$ belongs to a triangle in $\mathcal{G}(d)$ if and only if $H$ contains $2K_2$ or $C_4$ as an induced subgraph or contains the configuration shown in Figure~\ref{fig: matrogenic}.
\end{thm}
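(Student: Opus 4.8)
The plan is to translate membership in a triangle into a statement about how two $2$-switches applied to $H$ can interact. If a $2$-switch on the alternating $4$-cycle $[u,v:w,x]$ deletes $uv,wx$ and adds $ux,vw$, write $S=\{uv,vw,wx,xu\}$ for its set of four toggled pairs; performing the switch replaces $E(H)$ by $E(H)\triangle S$. Now $H$ lies in a triangle of $\mathcal{G}(d)$ exactly when there are two alternating $4$-cycles in $H$, with toggled-pair sets $S_1$ and $S_2$, whose resulting realizations $J$ and $K$ are themselves adjacent. Since $E(J)\triangle E(K)=S_1\triangle S_2$, adjacency of $J$ and $K$ means that $S_1\triangle S_2$ is the toggled-pair set of a single alternating $4$-cycle; as $|S_1|=|S_2|=4$, this forces $|S_1\cap S_2|=2$. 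So the whole problem reduces to classifying, up to relabeling, the ways two alternating $4$-cycles of $H$ can share exactly two toggled pairs while their symmetric difference is again alternating.

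First I would observe that a toggled pair is shared only if both of its endpoints are common to the two alternating $4$-cycles, so sharing two pairs forces the two $4$-cycles to meet in at least three vertices; since each uses only four vertices, they meet in exactly three or exactly four. This splits the analysis into two cases, which I expect to correspond respectively to the $4$-vertex induced subgraphs $2K_2$ and $C_4$ and to the $5$-vertex configuration of Figure~\ref{fig: matrogenic}.

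In the four-vertex case both switches act on the same quadruple, so it suffices to ask which labeled graphs on four vertices admit two distinct alternating $4$-cycles. A short enumeration of the eleven graphs on four vertices shows that an alternating $4$-cycle needs two disjoint edges joined by two non-edges, and that only the induced subgraphs $2K_2$ and $C_4$ supply two such configurations; moreover in each of these two cases one checks directly that the two resulting realizations differ by a single further $2$-switch, producing a triangle. In the three-vertex case, writing $p,q,r$ for the common vertices and $e,f$ for the private ones, the two shared pairs must be two of $pq,qr,rp$, and within each alternating $4$-cycle these two pairs are adjacent; the alternation condition then pins down the edges and non-edges of $H$ up to relabeling (for instance $pq,re,rf$ are edges and $qr,pe,pf$ are non-edges), which is precisely the configuration in Figure~\ref{fig: matrogenic}. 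A direct computation confirms that this configuration always yields adjacent $J$ and $K$, hence a triangle, irrespective of the unspecified pairs.

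The step I expect to be most delicate is the verification inside each case that $S_1\triangle S_2$ is not merely a set of four pairs but genuinely an alternating $4$-cycle in $J$ (equivalently in $K$): its pairs must form a $4$-cycle in ``pair space'' and must alternate between edges and non-edges of $J$. Checking the second property requires tracking, pair by pair, how the status of each element of $S_1\triangle S_2$ is changed, or left intact, by the switch $S_1$, and this is exactly where the prescribed edges and non-edges of each configuration are used; the configurations are tailored so that the alternation comes out correctly. Assembling the two cases gives both implications of the theorem: any triangle through $H$ arises from one of these overlap patterns and hence exhibits $2K_2$, $C_4$, or the configuration of Figure~\ref{fig: matrogenic} in $H$, and conversely each of these structures supplies the two interacting $2$-switches needed to build a triangle.
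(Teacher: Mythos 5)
Your argument is correct: reducing adjacency of the two neighbors $J,K$ to the condition that $S_1\triangle S_2$ be the toggled-pair set of a single alternating $4$-cycle, deducing $|S_1\cap S_2|=2$, and splitting on whether the two alternating $4$-cycles of $H$ share three or four vertices does yield exactly the trichotomy $2K_2$, $C_4$, or the configuration $\mathcal{D}_3$ of Figure~\ref{fig: matrogenic}, and the converse verifications you sketch all check out. Note that this paper only quotes the theorem from \cite{Barrus16} without reproving it, so there is no in-paper proof to compare against; your symmetric-difference overlap analysis is essentially the argument of the original source and is the same style of edge-set bookkeeping the present paper uses in Section~\ref{subsec: base case} for $4$-cliques.
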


\begin{figure}
    \centering
    \includegraphics[height=0.75in]{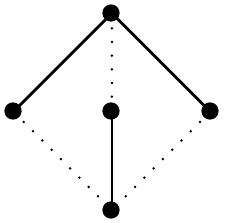}
    \caption{A configuration leading to a triangle in realization graphs}
    \label{fig: matrogenic}
\end{figure}

Theorem~\ref{thm: triangles} suggests further exploration. To have cliques larger than a triangle appear in a realization graph, a large collection of distinct realizations of a degree sequence must differ in their edge sets, but only slightly, so that each differs from any other by a single 2-switch. How can this be achieved? Here, if the clique size is a large integer $q$ and $H$ is a realization forming a vertex in the clique, then there must be distinct alternating cycles in $H$ that allow for the transformation of $H$ into each of the other $q-1$ realizations comprising the clique. Furthermore, each of the resulting $q-1$ realizations must be reachable from any other via a single 2-switch. Is this possible? If so, what structures in $H$ are necessary or sufficient for this to happen?

We will present a generalization of Theorem~\ref{thm: triangles} that answers these questions for cliques of any size; the full statement appears in Theorem~\ref{thm: iff} after necessary definitions and concepts are introduced. Given $q \geq 2$, we present a certain subgraph in Section~2 whose presence in any realization $H$ of $d$ leads to the inclusion of $H$ in a clique of size $q$ in $\mathcal{G}(d)$. Then, in Section~3, we show that this construction is always present in realizations belonging to cliques of order at least 4, so we obtain a characterization extending Theorem~\ref{thm: triangles}. Finally, in Section~4 we identify the degree sequences whose realization graphs are complete graphs; Theorem~\ref{thm: Kn iff} presents the characterization.

We establish a few items of notation and definition. In this paper a degree sequence is represented as an ordered list of integers, typically written in nonincreasing order. In a degree sequence, let $t^{(k)}$ denote the appearance of $t$ as a term $k$ distinct times; hence the degree sequence of the graph in Figure~\ref{fig: G22211} may be written as $(6,4^{(6)})$. A complete graph on $n$ vertices, i.e., a graph in which each possible pair of its $n$ vertices is adjacent, will be denoted by $K_n$. An \emph{independent set} will be a set of vertices that are pairwise nonadjacent. The disjoint union of two graphs $G$ and $H$ will be denoted by $G+H$, and the disjoint union of $t$ copies of the same graph $G$ will be written as $tG$. Finally, we use $\overline{G}$ to denote the complement of a graph $G$, i.e., the graph having the same vertex set as $G$ in which two vertices are adjacent precisely if they are not adjacent in $G$.

\section{A structure producing cliques in $\mathcal{G}(d)$} \label{sec: dials}

In this section we present a structure that can appear among the realizations of a degree sequence to produce a clique of any size. Visually, it bears some resemblance to an analog dial and needle (see Figure~\ref{fig: analog}), which motivates the name we give it.

\begin{figure}
    \centering
    \includegraphics[height=0.75in]{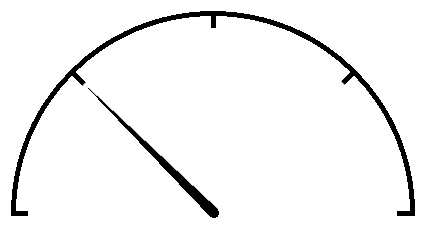}
    \caption{An analog dial and needle}
    \label{fig: analog}
\end{figure}

Given a set $\mathcal{S} = \{R_1,\dots,R_n\}$ of labeled realizations of the same degree sequence having the same vertex set $V$, define a \emph{dial with respect to $\mathcal{S}$} to be a pair of sets $(W,P)$ satisfying the following conditions.
\begin{enumerate}
	\item[(a)] The second entry $P$ is the set of all pairs of vertices from $V$ that differ in their status (adjacent or non-adjacent) among $R_1,\dots,R_n$. More precisely, for $a,b \in V$, the pair $\{a,b\}$ will belong to $P$ if $ab$ is an edge in some $R_i$ and not an edge in some $R_j$, where $i,j \in \{1,\dots,n\}$. The set $W$ is the union of all pairs in $P$, so $W \subseteq V$.
	\item[(b)] There exist two vertices $u,v \in W$ such that for every vertex $w \in W \setminus \{u,v\}$, both the pairs $\{u,w\}$ and $\{v,w\}$ belong to $P$, and no other pair belongs to $P$.
	\item[(c)] In every realization $R_i$ for $i \in \{1,\dots,n\}$, vertex $u$ is adjacent to exactly one vertex, denoted $w_i$, in $W \setminus \{u,v\}$. (This edge $uw_i$ is called the \emph{needle} in $R_i$.) In the same realization $R_i$, the vertex $v$ is not adjacent to $w_i$ but is adjacent to every vertex in $W \setminus \{u,v,w_i\}$.
\end{enumerate}

Given a dial with respect to $\mathcal{S}$, the induced subgraph in any $R_i$ having vertex set $W$ is called a \emph{dial state}. Within each $R_i$, the vertex set $W$ and the edges and non-edges from $P$ form a \emph{dial configuration}. Ignoring vertex labels, let $\mathcal{D}_n$ denote an unlabeled configuration of $n+2$ vertices, $n$ edges, and $n$ non-edges arranged as in a dial configuration. With this notation, the configuration in Figure~\ref{fig: matrogenic} is hence denoted $\mathcal{D}_3$.

In Figure~\ref{fig: 4 dials} we illustrate the dial configurations in four graphs $R_1,R_2,R_3,R_4$, using dotted segments to indicate non-adjacencies; each is an instance of $\mathcal{D}_4$. In each configuration the top vertex is $v$, the bottom vertex is $u$, and the middle vertices are $w_1,w_2,w_3,w_4$. We emphasize that $u$, $v$, and the interior vertices in each configuration are the same vertices in each realization; the only thing that varies in each configuration (or in each dial state) is which pair in $P$ containing $u$ is the needle.

\begin{figure}
    \centering
    \includegraphics[width=\textwidth]{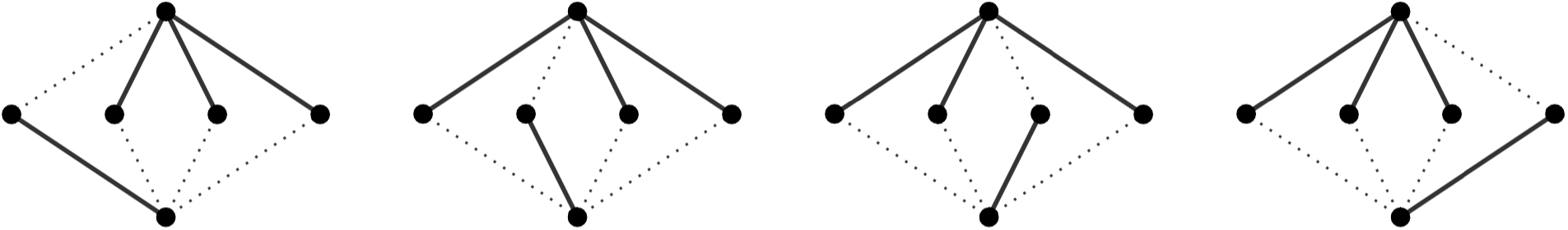}
    \caption{Configurations from states of a dial in four realizations of a degree sequence}
    \label{fig: 4 dials}
\end{figure}

\begin{lem} \label{lem: dial implies clique}
    If a dial exists for a set $\{R_1,\dots,R_n\}$ of realizations of a degree sequence $d$, then these realizations form a clique in the realization graph $\mathcal{G}(d)$.
    
    Furthermore, if some realization $R$ of a degree sequence $d$ contains the configuration $\mathcal{D}_n$, then $R$ belongs to a clique of size $n$ in $\mathcal{G}(d)$.
\end{lem}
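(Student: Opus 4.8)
The plan is to prove the two assertions in turn, deriving the second from the first. For the first statement I would argue directly that any two realizations drawn from a dial are joined by a single 2-switch; for the ``furthermore'' I would manufacture such a dial from an embedded copy of $\mathcal{D}_n$ and then invoke the first statement.

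For the first statement, fix distinct indices $i,j$ and compare $R_i$ with $R_j$ on the vertices of $W$. By condition (c), in $R_i$ the vertex $u$ is adjacent to $w_i$ and nonadjacent to every other interior vertex, while $v$ is adjacent to every interior vertex except $w_i$; in particular $uw_j$ is a nonedge and $vw_j$ is an edge of $R_i$. Thus the four pairs $\{u,w_i\},\{v,w_j\},\{u,w_j\},\{v,w_i\}$ form the alternating 4-cycle $[u,w_i:v,w_j]$ in $R_i$, and the 2-switch on it deletes $uw_i,vw_j$ and adds $uw_j,vw_i$. The key point is that the resulting graph agrees with $R_j$ everywhere: it matches $R_j$ on these four pairs, which now encode a needle at $w_j$; it matches $R_j$ on the remaining pairs of $P$, namely $\{u,w_k\},\{v,w_k\}$ for $k\ne i,j$, on which $R_i$ and $R_j$ already agree by condition (c); and it matches $R_j$ on every pair outside $P$, since by condition (a) all the realizations coincide off $P$. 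Hence $R_i$ and $R_j$ are adjacent in $\mathcal{G}(d)$, and as $i,j$ were arbitrary the realizations form a clique. They are distinct because the pair $\{u,w_i\}$ separates $R_i$ from each $R_j$.

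For the ``furthermore,'' suppose $R$ contains $\mathcal{D}_n$ via an injective map $f$, and name the images of the dial vertices $u,v,w_1,\dots,w_n\in V(R)$, with the needle at $w_1$; thus $R$ has edges $uw_1$ and $vw_2,\dots,vw_n$ and nonedges $vw_1$ and $uw_2,\dots,uw_n$. For each $i\in\{2,\dots,n\}$ let $R_i$ be the graph obtained from $R=R_1$ by the 2-switch on the alternating 4-cycle $[u,w_1:v,w_i]$, which moves the needle from $w_1$ to $w_i$ while toggling only the four pairs $\{u,w_1\},\{v,w_1\},\{u,w_i\},\{v,w_i\}$. Each $R_i$ is a legitimate realization of $d$ because it arises from $R$ by a single 2-switch. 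I would then check that $\{R_1,\dots,R_n\}$ carries a dial: the pairs that vary across these realizations are exactly $\{u,w_k\}$ and $\{v,w_k\}$ for $k=1,\dots,n$, giving the required $P$ and $W=\{u,v,w_1,\dots,w_n\}$ for condition (a); the remaining pairs inside $W$, namely $\{u,v\}$ and the pairs among the $w_k$, are never toggled and so lie outside $P$, yielding condition (b); and by construction $u$ is adjacent in $R_i$ to exactly the interior vertex $w_i$ while $v$ avoids only $w_i$, giving condition (c). Applying the first statement, these $n$ distinct realizations form a clique containing $R$.

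The main obstacle is conceptual rather than computational: one must be sure the needle-moving 2-switches are legitimate even though $\mathcal{D}_n$ specifies nothing about the diagonal pair $\{u,v\}$, the pairs among the $w_k$, or the pairs joining $W$ to $V(R)\setminus W$. The resolution is exactly the feature noted just after the definition of an alternating 4-cycle: a 2-switch requires only the status of its four participating pairs and toggles only those four, so all the unspecified adjacencies are left untouched and can neither obstruct the switch nor disturb agreement off $P$. The remaining care is bookkeeping: confirming in the first statement that after the switch $R_i$ and $R_j$ coincide on \emph{every} pair, and confirming in the second that the constructed family differs precisely on the pairs of $P$, so that conditions (a)--(c) hold verbatim.
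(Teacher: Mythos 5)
Your proposal is correct and follows essentially the same route as the paper: the clique edges come from the 2-switches on the alternating 4-cycles $[u,w_i:v,w_j]$, and the ``furthermore'' is obtained by generating the $n-1$ needle-moved realizations from the embedded $\mathcal{D}_n$ and verifying they carry a dial. You simply spell out in more detail the bookkeeping (agreement off $P$, distinctness, and conditions (a)--(c)) that the paper leaves as ``straightforward to see.''
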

\begin{proof}
    Given the dial for $\{R_1,\dots,R_n\}$ as indicated, let $u$, $v$, and $w_1,\dots,w_n$ denote the vertices of the dial as described above. For any $i,j$ in $\{1,\dots,n\}$, the 2-switch on graph $R_i$ using alternating 4-cycle $[u,w_i:v,w_j]$ produces the graph $R_j$. Hence these realizations are pairwise adjacent in $\mathcal{G}(d)$.
    
    Suppose now that some realization $R$ of a degree sequence $d$ contains the configuration $\mathcal{D}_n$. The $n-1$ alternating 4-cycles that use edges and non-edges from this configuration and include ``the needle'' permit 2-switches yielding $n-1$ additional, distinct realizations of $d$. It is straightforward to see that the $n+2$ vertices involved form the vertex set of a dial for these realizations, so as before $R$ belongs to a clique of size $n$ in $\mathcal{G}(d)$.
\end{proof}
In~\cite{FoldesHammer78}, F\"{o}ldes and Hammer characterized \emph{matrogenic graphs} as those for which no five vertices' adjacency relationships admitted the configuration $\mathcal{D}_3$ from Figure~\ref{fig: matrogenic}. As an immediate corollary to Lemma~\ref{lem: dial implies clique}, we conclude that every non-matrogenic graph is a vertex in a triangle in the realization graph of its degree sequence. (With some additional conditions, the reverse implication is true; for details, see~\cite{Barrus16}.)

\section{Necessity of the construction}
In this section we prove a near converse to Lemma~\ref{lem: dial implies clique}. 

\begin{thm} \label{thm: clique implies dial}
    If $R_1,\dots,R_n$ are the vertices of a clique in a realization graph $\mathcal{G}(d)$, where $n \geq 4$, then a dial exists for this collection of graphs. Moreover, the corresponding dial configuration in each realization $R_i$ contains all alternating 4-cycles necessary for 2-switches converting $R_i$ into $R_j$ for $j \in \{1,\dots,n\}\setminus\{i\}$.
\end{thm}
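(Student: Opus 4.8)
The plan is to start from an arbitrary clique $R_1,\dots,R_n$ with $n\geq 4$ and reverse-engineer the dial structure. Let $P$ be the set of pairs that change status among the $R_i$ (this is forced by condition (a)), and let $W=\bigcup P$. The whole task is to verify conditions (b) and (c): that $P$ has the ``two hubs plus spokes'' shape, and that the adjacencies behave like a needle on a dial. First I would record the basic constraint coming from adjacency in $\mathcal{G}(d)$: since each $R_i$ and $R_j$ differ by a single 2-switch, the symmetric difference of their edge sets is exactly one alternating 4-cycle, hence consists of precisely two toggled pairs (one edge-to-nonedge and one nonedge-to-edge), and those four vertices overlap in a specific pattern.

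\medskip

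The key combinatorial engine is to examine how these pairwise symmetric differences interact across the clique. For each unordered pair $\{i,j\}$, write $\Delta_{ij}\subseteq P$ for the two pairs toggled between $R_i$ and $R_j$; I would first argue that consecutive differences share structure. Fixing $R_1$ as a base point, every other $R_j$ is obtained from $R_1$ by toggling two pairs, so $R_j$'s edge set relative to $R_1$ is encoded by a small symmetric-difference vector. The crucial observation is that for $R_i$ and $R_j$ to also differ by a single 2-switch, the vectors for $R_i$ and $R_j$ relative to $R_1$ must themselves differ in exactly one alternating 4-cycle. The plan is to use this ``triangle condition'' on three realizations at a time: given $R_1,R_i,R_j$ mutually adjacent, the three pairwise 2-switches must be compatible, and a short case analysis on how two alternating 4-cycles can share vertices forces a common pair of vertices to recur. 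This is where I expect the hypothesis $n\geq 4$ to be essential: with only three realizations the shared structure is not yet pinned down (indeed Theorem~\ref{thm: triangles} already allows $2K_2$ and $C_4$ as alternatives to $\mathcal{D}_3$), but a fourth mutually adjacent realization eliminates the degenerate alternatives and forces the genuine dial.

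\medskip

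Concretely, I would identify the candidate hub vertices $u$ and $v$ as the two vertices appearing in (almost) every $\Delta_{ij}$, and define the needle in $R_i$ to be the edge at $u$ inside $W\setminus\{u,v\}$. Establishing condition (c)---that $u$ is adjacent to exactly one such $w_i$, that $v$ misses exactly $w_i$, and that $v$ is adjacent to all other interior vertices---should follow once (b) is in place by tracking which pairs toggle: each 2-switch between $R_i$ and $R_j$ must move the needle from $w_i$ to $w_j$ at $u$ while simultaneously restoring the $v$-edge at $w_i$ and breaking it at $w_j$, which is exactly the alternating 4-cycle $[u,w_i:v,w_j]$ used in the proof of Lemma~\ref{lem: dial implies clique}. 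The ``moreover'' clause---that the dial configuration already contains all the needed alternating 4-cycles---then comes for free, since by construction every required 2-switch is realized by one of these $[u,w_i:v,w_j]$ cycles lying entirely within $W$.

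\medskip

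The hard part, and the heart of the argument, will be the case analysis forcing conditions (a) and (b): showing that two vertices $u,v$ serve as common endpoints for all pairs in $P$ and that $P$ contains no ``diagonal'' pairs beyond the spokes. I expect this to require carefully enumerating the ways two or three alternating 4-cycles can overlap in vertices and ruling out every configuration except the dial, leveraging the mutual-adjacency of at least four realizations to discard the sporadic small cases. I would organize this as a sequence of claims: first that all $\Delta_{ij}$ share a common vertex, then that they share a common \emph{pair} of vertices $\{u,v\}$, and finally that the non-hub endpoints are distinct needle positions $w_i$, after which conditions (a)--(c) and the moreover clause assemble directly.
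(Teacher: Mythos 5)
Your overall strategy---fix the clique, examine the pairwise symmetric differences $\Delta_{ij}$, and force them all to pass through two hub vertices $u,v$---aims at the right target, and your reading of why $n\geq 4$ is essential (three realizations admit the degenerate $2K_2$/$C_4$ alternatives) is accurate. But the proposal defers exactly the step that constitutes the proof. You write that ``the hard part, and the heart of the argument, will be the case analysis forcing conditions (a) and (b)'' and that you \emph{expect} it to require enumerating the ways two or three alternating 4-cycles can overlap---and then you do not perform that enumeration or even indicate how to organize it so that it terminates. Everything downstream of that step (identifying $u$ and $v$, verifying condition (c), and the ``moreover'' clause) is indeed routine, so as it stands this is a plan rather than a proof. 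There is also a small but real error at the outset: the symmetric difference of the edge sets of two adjacent realizations consists of \emph{four} pairs (the 2-switch deletes two edges and adds two), not ``precisely two toggled pairs'' as you state; the correct count is what drives the paper's equations.

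For comparison, the paper sidesteps a raw overlap case analysis with a counting argument plus induction. For $n=4$ it introduces variables $s_I$ recording how many edges lie in exactly the realizations indexed by $I\subseteq\{1,2,3,4\}$, writes the linear system coming from ``each $R_i$ has $m$ edges'' and ``each $R_i$ has exactly two edges not in $R_j$,'' and proves two short lemmas ($s_I\leq 1$ for proper nonempty $I$, and $s_{ij}+s_{ik}\leq 1$) that kill all but one nonnegative solution: each realization has exactly one private edge and exactly one non-edge that is an edge in all the others. That unique surviving solution forces the three alternating 4-cycles out of $R_1$ to share an edge-endpoint and a non-edge-endpoint, assembling into $\mathcal{D}_4$. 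The case $n>4$ is then handled by induction, gluing the dials obtained for $\{R_1,\dots,R_k\}$ and $\{R_2,\dots,R_{k+1}\}$ along their common hubs and needles---an induction your proposal does not set up, since you appear to want to treat all $n$ at once by the same overlap analysis, which would be substantially more delicate. To complete your argument you would need either to carry out the overlap enumeration in full for general $n$, or to restrict it to $n=4$ and supply the inductive gluing step.
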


Observe that if $n=2$ in the hypothesis above, then the conclusion is still valid and follows from the definition of $\mathcal{G}(d)$; the states of the dial are simply the ``before'' and ``after'' versions of the alternating 4-cycle on which the 2-switch is performed. The conclusion in Theorem~\ref{thm: clique implies dial} does not hold for $n=3$, however; for instance, the three realizations of $(1,1,1,1)$ form a triangle in the realization graph though none contains the configuration $\mathcal{D}_3$. A similar result is true for many graphs containing an an induced subgraph with degree sequence $(1,1,1,1)$ or a chordless cycle on 4 vertices (in which case the graph's complement contains the induced subgraph). Note that these examples are mentioned along with $\mathcal{D}_3$ in Theorem~\ref{thm: triangles}.

We prove Theorem~\ref{thm: clique implies dial} for the cases $n \geq 4$ by induction. Section~\ref{subsec: base case} contains the result for $n=4$, and Section~\ref{subsec: induction step} contains the induction step.

\subsection{Base case} \label{subsec: base case}

Let $R_1,R_2,R_3,R_4$ be the vertices of a clique of size 4 in some realization graph $\mathcal{G}(d)$. Let $m$ be the number of edges in each realization. Since these four graphs are a clique in $\mathcal{G}(d)$, for each pair $i,j$ of distinct elements in $\{1,2,3,4\}$, the graph $R_i$ can be transformed into $R_j$ by a single 2-switch. This requires that $R_i$ and $R_j$ share $m-2$ edges and that each contain two edges that the other does not.

To analyze these requirements, we let $s_{I}$ denote the number of edges that appear in every realization $R_i$ for $i$ displayed in the subscript $I$ and that do \emph{not} appear in any realization $R_j$ for $j$ not displayed in $I$. Here the subscripts $I$ correspond to subsets of $\{1,2,3,4\}$ (written without enclosing braces or commas). Using a Venn diagram whose ellipses respectively represent the edge sets of $R_1,R_2,R_3,R_4$, the variables $s_I$ in the interior regions of Figure~\ref{fig: Venn} indicate the sizes of the subsets to which the various regions correspond.

\begin{figure}
    \centering
    \begin{tikzpicture} 
        \draw \firstellip node [label={[xshift=2.1cm, yshift=-1cm]$E(R_1)$}] {};
        \draw \secondellip node [label={[xshift=2.4cm, yshift=2.1cm]$E(R_2)$}] {};
        \draw \thirdellip node [label={[xshift=-2.1cm, yshift=-1cm]$E(R_3)$}] {};
        \draw \fourthellip node [label={[xshift=-2.4cm, yshift=2.1cm]$E(R_4)$}] {};
        \draw (0.4,-1.2)--(2.6,-1.6);
        \draw (-0.4,-1.2)--(-2.6,-1.6);
        \node at (2.6,0.3) {$s_{1}$};
        \node at (1.3,2.6) {$s_{2}$};
        \node at (-2.6,0.3) {$s_{3}$};
        \node at (-1.3,2.6) {$s_{4}$};
        \node at (1.8,1.5) {$s_{12}$};
        \node at (0,-1.8) {$s_{13}$};
        \node at (1.3,-0.75) {$s_{14}$};
        \node at (-1.3,-0.75) {$s_{23}$};
        \node at (0,1.5) {$s_{24}$};
        \node at (-1.8,1.5) {$s_{34}$};
        \node at (-3,-1.7) {$s_{123}$};
        \node at (0.8,0.3) {$s_{124}$};
        \node at (3,-1.7) {$s_{134}$};
        \node at (-0.8,0.3) {$s_{234}$};
        \node at (0,-0.7) {$s_{1234}$};
    \end{tikzpicture} 
    \caption{Venn diagram of the edges sets of $R_1,R_2,R_3,R_4$, with cardinalities indicated}
    \label{fig: Venn}
\end{figure}
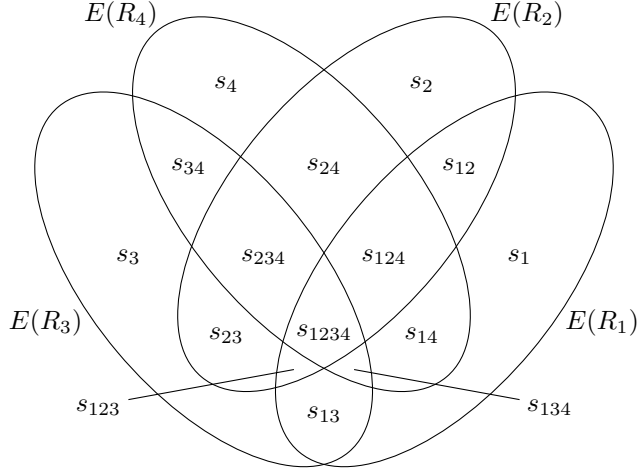

We use these variables to describe the overlaps in our four pairwise-adjacent realizations, obtaining the following system of equations. 
\begin{align}
    \sum_{I \ni i} s_I &= m \qquad \text{for $1 \leq i \leq 4$;} \label{eq: edges e}\\
    \sum_{\substack{J \ni i\\J \not \ni j}} s_J &= 2 \qquad \text{for $1 \leq i < j \leq 4$.} \label{eq: differences}
\end{align}
Here \eqref{eq: edges e} holds because $R_i$ has exactly $m$ edges. The equations in~\eqref{eq: differences} model the fact that $R_i$ has exactly two edges that $R_j$ does not, as mentioned above; as we will see shortly, the condition $i<j$ ensures that the overall system satisfies no linear dependence relations.

Using these equations, we construct a 10-by-16 augmented matrix $M$ for the system, which we display below followed by its reduced echelon form $M'$. Here the first 15 matrix columns are indexed by the subscripts on the corresponding variables $s_I$, with the variables $s_i$ first, ordered lexicographically, followed by the variables $s_{ij}$, ordered lexicographically, followed by the variables $s_{ijk}$, in \emph{reverse} lexicographic order, and followed finally by the variable $s_{1234}$.
\setcounter{MaxMatrixCols}{20}
\[M = 
\begin{bmatrix}
1 & 0 & 0 & 0 & 1 & 1 & 1 & 0 & 0 & 0 & 0 & 1 & 1 & 1 & 1 & m\\ 
0 & 1 & 0 & 0 & 1 & 0 & 0 & 1 & 1 & 0 & 1 & 0 & 1 & 1 & 1 & m\\ 
0 & 0 & 1 & 0 & 0 & 1 & 0 & 1 & 0 & 1 & 1 & 1 & 0 & 1 & 1 & m\\ 
0 & 0 & 0 & 1 & 0 & 0 & 1 & 0 & 1 & 1 & 1 & 1 & 1 & 0 & 1 & m\\ 
1 & 0 & 0 & 0 & 0 & 1 & 1 & 0 & 0 & 0 & 0 & 1 & 0 & 0 & 0 & 2\\ 
1 & 0 & 0 & 0 & 1 & 0 & 1 & 0 & 0 & 0 & 0 & 0 & 1 & 0 & 0 & 2\\ 
1 & 0 & 0 & 0 & 1 & 1 & 0 & 0 & 0 & 0 & 0 & 0 & 0 & 1 & 0 & 2\\ 
0 & 1 & 0 & 0 & 1 & 0 & 0 & 0 & 1 & 0 & 0 & 0 & 1 & 0 & 0 & 2\\ 
0 & 1 & 0 & 0 & 1 & 0 & 0 & 1 & 0 & 0 & 0 & 0 & 0 & 1 & 0 & 2\\ 
0 & 0 & 1 & 0 & 0 & 1 & 0 & 1 & 0 & 0 & 0 & 0 & 0 & 1 & 0 & 2 
\end{bmatrix};
\]
\[M' = 
\begin{bmatrix}
1 & 0 & 0 & 0 & 0 & 0 & 0 & 0 & 0 & 0 & 0 & -1 & -1 & -1 & -2 & 6-2m\\ 
0 & 1 & 0 & 0 & 0 & 0 & 0 & 0 & 0 & 0 & -1 & 0 & -1 & -1 & -2 & 6-2m\\ 
0 & 0 & 1 & 0 & 0 & 0 & 0 & 0 & 0 & 0 & -1 & -1 & 0 & -1 & -2 & 6-2m\\ 
0 & 0 & 0 & 1 & 0 & 0 & 0 & 0 & 0 & 0 & -1 & -1 & -1 & 0 & -2 & 6-2m\\ 
0 & 0 & 0 & 0 & 1 & 0 & 0 & 0 & 0 & 0 & 0 & 0 & 1 & 1 & 1 & m-2\\ 
0 & 0 & 0 & 0 & 0 & 1 & 0 & 0 & 0 & 0 & 0 & 1 & 0 & 1 & 1 & m-2\\ 
0 & 0 & 0 & 0 & 0 & 0 & 1 & 0 & 0 & 0 & 0 & 1 & 1 & 0 & 1 & m-2\\ 
0 & 0 & 0 & 0 & 0 & 0 & 0 & 1 & 0 & 0 & 1 & 0 & 0 & 1 & 1 & m-2\\ 
0 & 0 & 0 & 0 & 0 & 0 & 0 & 0 & 1 & 0 & 1 & 0 & 1 & 0 & 1 & m-2\\ 
0 & 0 & 0 & 0 & 0 & 0 & 0 & 0 & 0 & 1 & 1 & 1 & 0 & 0 & 1 & m-2 
\end{bmatrix}.
\]

Having constrained the values of the variables $s_I$ by the system in~\eqref{eq: edges e} and~\eqref{eq: differences}, we may further restrict the possible values for these  variables with a few lemmas.

\begin{lem} \label{lem: vars at most 1}
	If $I \subset \{1,2,3,4\}$ with $I \neq \emptyset$ and $I \neq \{1,2,3,4\}$, then $s_I \leq 1$.
\end{lem}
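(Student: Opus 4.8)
The plan is to argue by contradiction: suppose $s_I \geq 2$ for some proper nonempty $I \subseteq \{1,2,3,4\}$ and show that two of the realizations $R_1,\dots,R_4$ must coincide, contradicting that distinct vertices of a clique are distinct graphs. First I would reduce the number of cases using complementation. Passing to the complements $\overline{R_1},\dots,\overline{R_4}$, which form a clique in $\mathcal{G}(\overline d)$, a pair of vertices that is an edge in exactly the realizations indexed by $I$ becomes a pair that is an edge in exactly the complementary realizations; hence the complement's analogue of $s_{\{1,2,3,4\}\setminus I}$ equals the original $s_I$. Taking $|I|=3$ gives $|\{1,2,3,4\}\setminus I|=1$, so the case $|I|=3$ follows from the case $|I|=1$, and it suffices to treat $|I|\in\{1,2\}$.

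Next I would extract the purely linear consequences of $s_I = 2$. Each equation in \eqref{eq: differences} has right-hand side $2$ and nonnegative summands, so any equation $\sum_{J\ni i,\,J\not\ni j} s_J = 2$ with $i \in I$ and $j \notin I$ contains the term $s_I$ and therefore forces every other summand in it to vanish. Running over all admissible pairs $(i,j)$ annihilates a large block of variables. For $I=\{1,2\}$ this gives $s_1=s_2=s_{13}=s_{14}=s_{23}=s_{24}=s_{123}=s_{124}=0$; combining this with the 2-switch property that each $|E(R_i)\triangle E(R_j)|=4$ split evenly then forces $s_{134}=s_{234}=2$ and $s_3=s_4=s_{34}=0$. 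With these vanishings, both $E(R_3)\setminus E(R_1)$ and $E(R_4)\setminus E(R_1)$ equal exactly the two edges counted by $s_{234}$, while $E(R_1)\setminus E(R_3)=E(R_1)\setminus E(R_4)$ is the pair counted by $s_{12}$. Thus $R_3$ and $R_4$ arise from $R_1$ by deleting the same two edges and adding the same two edges, so $R_3=R_4$, a contradiction. This case is settled by counting alone.

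The case $|I|=1$ is where I expect the real obstacle, since counting is not sufficient. For $I=\{1\}$ the same bookkeeping forces $s_J=0$ for every $J$ with $1\in J$ other than $J=\{1\}$ and $J=\{1,2,3,4\}$, so for each $j\in\{2,3,4\}$ the 2-switch carrying $R_1$ to $R_j$ deletes the \emph{same} pair $e,f$ of edges (those counted by $s_1$) and adds $B_j := E(R_j)\setminus E(R_1)$. At this point the $B_j$ are only known to be pairwise disjoint, which is not yet a contradiction; the crux is to invoke the alternating-4-cycle structure of a 2-switch. Because $e,f$ are the two deleted edges of an alternating 4-cycle, they are vertex-disjoint, say $e=ab$ and $f=cd$ with $a,b,c,d$ distinct, and the only alternating 4-cycles whose deleted edges are exactly $\{ab,cd\}$ add either $\{ad,bc\}$ or $\{ac,bd\}$. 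Hence each $B_j$ is one of just two possible pairs, so by pigeonhole two of $B_2,B_3,B_4$ coincide, forcing two of $R_2,R_3,R_4$ to be equal. (Run directly, the $|I|=3$ case is identical: reaching $R_4$ from each of $R_1,R_2,R_3$ deletes the same vertex-disjoint pair and admits only two completions, so pigeonhole forces two of $R_1,R_2,R_3$ to coincide.)

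I would stress at the outset why the reduced echelon form $M'$ cannot by itself prove the lemma: its entries depend on $m$ and only pin down the $s_I$ modulo the free variables, so the bound $s_I\le 1$ is genuinely combinatorial. It rests on two ingredients beyond the linear system—the distinctness of clique vertices and the rigidity of alternating 4-cycles—and the hardest step is exploiting that rigidity to limit a 2-switch with prescribed deleted edges to only two possible outcomes.
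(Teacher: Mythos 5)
Your proof is correct, and for two of the three cases ($|I|=1$ and, via complementation, $|I|=3$) it is essentially the paper's argument: the same reduction to the complements, and the same observation that a pair of vertex-disjoint edges admits only two alternating 4-cycles deleting exactly that pair, so three distinct 2-switches cannot share their deleted edges. Where you genuinely diverge is the case $|I|=2$. The paper, after noting that both 2-switches $R_1\to R_3$ and $R_1\to R_4$ must delete the same pair $uv,wx$, analyzes the structure directly: it deduces that $\{u,v,w,x\}$ induces $2K_2$ in $R_1$ and in $R_3$, and then shows the 2-switch $R_3\to R_2$ would be forced to reuse the only available edges $ux,vw$ and hence recreate $R_1$ instead of $R_2$. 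You instead push the counting further: using \eqref{eq: differences} (and its symmetric counterparts $|E(R_j)\setminus E(R_i)|=2$, which follow from \eqref{eq: edges e}) you annihilate enough regions of the Venn diagram to conclude that $E(R_3)\setminus E(R_1)=E(R_4)\setminus E(R_1)$ and $E(R_1)\setminus E(R_3)=E(R_1)\setminus E(R_4)$ as sets, whence $R_3=R_4$. I checked the bookkeeping and it is right (your values $s_{134}=s_{234}=2$, $s_3=s_4=s_{34}=0$ do follow from the equations you invoke; they are of course jointly inconsistent with $|E(R_3)\setminus E(R_4)|=2$, which is exactly the content of $R_3=R_4$). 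Your route buys a shorter, purely arithmetic treatment of $|I|=2$ that avoids any case analysis of which non-edges the 2-switches use; the paper's version buys structural information (the induced $2K_2$ and the ``undoing'' phenomenon) that echoes the themes of Theorem~\ref{thm: triangles} but is not needed for the bound itself. Your closing remark that the echelon form $M'$ alone cannot yield the lemma is also accurate and consistent with how the paper deploys the result.
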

\begin{proof}
	Suppose to the contrary that $s_I \geq 2$ for some $I$ as described.
	
	Consider the case $|I|=1$ first. Re-indexing if necessary, we may assume that $I=\{1\}$. Taking $i=1$ and $j=2$ in \eqref{eq: differences} above, we have $s_I \leq 2$. However, if $s_I=2$, then three distinct alternating 4-cycles (those used in 2-switches changing $R_1$ to each of $R_2$, $R_3$, and $R_4$) would use the same pair of edges, which is impossible. Thus $s_I \leq 1$ if $|I|=1$.
	
	We may apply this same argument to the complementary realizations $\overline{R_1}$, $\overline{R_2}$, $\overline{R_3}$, and  $\overline{R_4}$, which form a clique in the realization graph of their collective degree sequence. Any edge appears in exactly one of these realizations $\overline{R_i}$ if and only if it is an edge in each graph $R_j$ for $j \in \{1,2,3,4\} \setminus \{i\}$. It follows that $s_I \leq 1$ if $|I| = 3$ as well.
	
	Supposing now that $|I|=2$, by re-indexing if necessary we may assume that $I=\{1,2\}$ and that $s_{12} \geq 2$. As before, \eqref{eq: differences} yields $s_{12}\leq 2$, so $s_{12}=2$. Let $uv,wx$ be these two edges in $R_1$. Since $R_3$ and $R_4$ have distinct edge sets, the 2-switches changing $R_1$ into each must differ on which non-edges are involved in the corresponding alternating 4-cycles (since both contain $uv,wx$). Without loss of generality we may assume that the 2-switch changing $R_1$ into $R_3$ uses edges non-edges $ux,vw$, and that the 2-switch changing $R_1$ into $R_4$ uses non-edges $uw,vx$. This requires that the subgraph of $R_1$ induced by $\{u,v,w,x\}$ be isomorphic to $2K_2$; the subgraph of $R_3$ on these vertices must be as well. 
	Note that the edges $uv,wx$ are present in $R_2$ but not in $R_3$, so the alternating 4-cycle used in the 2-switch transforming $R_3$ into $R_2$ must include non-edges $uv,wx$ from $R_3$. However, the only edges in $R_3$ induced by the vertex set $\{u,v,w,x\}$ are the edges $ux,vw$, and if we use these edges together with the requisite non-edges in a 2-switch, instead of creating $R_2$ we in effect undo the previous 2-switch, recreating $R_1$, a contradiction.
	
	Hence $s_I \leq 1$ for all sets $I \subseteq \{1,2,3,4\}$ satisfying $1 \leq |I| \leq 3$.
\end{proof}

From the reduced augmented matrix $M'$ we see that solutions to the system in \eqref{eq: edges e} and~\eqref{eq: differences} are determined by the value of five of the variables $s_I$. Lemma~\ref{lem: vars at most 1} implies that each variables $s_I$, other than $s_{1234}$, equals either 0 or 1. 
There are 32 solutions of the system produced by substituting candidate values for $s_{234},s_{134},s_{124},s_{123}$, and $s_4$; in only ten is every variable a nonnegative integer (and equal to 0 or 1 if the variable is not $s_{1234}$). We display these here, with one solution per line: 

\begin{center}
{\scriptsize
    \begin{tabular}{ccccccccccccccc}
        $s_1$ & $s_2$ & $s_3$ & $s_4$ & $s_{12}$ &  $s_{13}$ & $s_{14}$ & $s_{23}$ & $s_{24}$ & $s_{34}$ & $s_{234}$ & $s_{134}$ & $s_{124}$ & $s_{123}$ & $s_{1234}$\\ \hline
        0 & 0 & 0 & 0 & 1 & 1 & 1 & 1 & 1 & 1 & 0 & 0 & 0 & 0 & $m-3$\\ \hline
        0 & 1 & 1 & 1 & 1 & 1 & 1 & 0 & 0 & 0 & 1 & 0 & 0 & 0 & $m-3$\\ 
        1 & 0 & 1 & 1 & 1 & 0 & 0 & 1 & 1 & 0 & 0 & 1 & 0 & 0 & $m-3$\\
        1 & 1 & 0 & 1 & 0 & 1 & 0 & 1 & 0 & 1 & 0 & 0 & 1 & 0 & $m-3$\\
        1 & 1 & 1 & 0 & 0 & 0 & 1 & 0 & 1 & 1 & 0 & 0 & 0 & 1 & $m-3$ \\ \hline
        1 & 0 & 0 & 0 & 0 & 0 & 0 & 1 & 1 & 1 & 0 & 1 & 1 & 1 & $m-4$\\
        0 & 1 & 0 & 0 & 0 & 1 & 1 & 0 & 0 & 1 & 1 & 0 & 1 & 1 & $m-4$\\
        0 & 0 & 1 & 0 & 1 & 0 & 1 & 0 & 1 & 0 & 1 & 1 & 0 & 1 & $m-4$ \\
        0 & 0 & 0 & 1 & 1 & 1 & 0 & 1 & 0 & 0 & 1 & 1 & 1 & 0 & $m-4$\\ \hline
        1 & 1 & 1 & 1 & 0 & 0 & 0 & 0 & 0 & 0 & 1 & 1 & 1 & 1 & $m-4$
    \end{tabular}
    }
\end{center}
(In the table we have used horizontal lines to group solutions that are equivalent up to permuting the names of the realizations $R_1,R_2,R_3,R_4$.)

Though Lemma~\ref{lem: vars at most 1} considerably narrowed the possibilities for our candidate values for the variables $s_I$, even among the ten settings we have found, not all of them actually reflect a possible situation for the realizations $R_1,R_2,R_3,R_4$. Our next lemma will rule out all possibilities but one.

\begin{lem} \label{lem: No 12,13}
	Suppose that $A=\{i,j\}$ and $B=\{i,k\}$ for distinct elements $i,j,k$ from $\{1,2,3,4\}$. Then $s_A + s_B \leq 1$.
\end{lem}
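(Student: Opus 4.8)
The plan is to argue by contradiction. Suppose $s_A + s_B \geq 2$. Since Lemma~\ref{lem: vars at most 1} gives $s_A \leq 1$ and $s_B \leq 1$, we must have $s_A = s_B = 1$, and I will derive an impossibility from the 2-switches connecting the four realizations. By the symmetry of relabeling the $R_i$, I may take $A = \{1,2\}$ and $B = \{1,3\}$; let $uv$ be the single edge counted by $s_{12}$ (an edge of $R_1$ and $R_2$ but of neither $R_3$ nor $R_4$) and let $xy$ be the single edge counted by $s_{13}$ (an edge of $R_1$ and $R_3$ but of neither $R_2$ nor $R_4$). These are distinct edges, since $xy$ is an edge of $R_3$ while $uv$ is not.

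The first key step is to show that the four endpoints $u,v,x,y$ are pairwise distinct. For this I would use the 2-switch converting $R_1$ into $R_4$. The edges lying in $R_1$ but not $R_4$ are exactly those counted by $s_1 + s_{12} + s_{13} + s_{123}$, and \eqref{eq: differences} (with $i=1$, $j=4$) makes this sum equal to $2$. Since $s_{12} = s_{13} = 1$, we get $s_1 = s_{123} = 0$, so the only edges of $R_1$ absent from $R_4$ are $uv$ and $xy$. A single 2-switch converting $R_1$ to $R_4$ must therefore delete precisely $uv$ and $xy$; as the deleted pair of any 2-switch forms a matching on the four distinct vertices of an alternating 4-cycle, the edges $uv$ and $xy$ are disjoint, so $u,v,x,y$ are pairwise distinct.

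The second and decisive step examines the 2-switch converting $R_2$ into $R_3$. Because $uv$ is an edge of $R_2$ but not of $R_3$, this switch must delete $uv$; because $xy$ is an edge of $R_3$ but not of $R_2$, it must add $xy$. Any 2-switch operates on an alternating 4-cycle with four distinct vertices, deleting a perfect matching on those vertices and adding the complementary matching. Since $uv$ is deleted and $xy$ is added with all of $u,v,x,y$ distinct, the vertex set of this alternating 4-cycle is exactly $\{u,v,x,y\}$. But then the deleted matching contains $uv$, so it must be $\{uv,xy\}$, forcing $xy$ to be an edge of $R_2$ — contradicting $xy \notin E(R_2)$. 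This contradiction completes the argument.

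I expect the only real obstacle to be the distinctness of $u,v,x,y$: without it, a deleted edge and an added edge of the $R_2 \to R_3$ switch could legitimately share an endpoint, and the clean matching contradiction would evaporate. Securing distinctness is exactly what the fourth realization $R_4$ buys us through the counting identity \eqref{eq: differences}, which is why I route the argument through $R_1 \to R_4$ before turning to $R_2 \to R_3$.
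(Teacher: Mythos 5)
Your proposal is correct and follows essentially the same route as the paper: both use the $R_1 \to R_4$ 2-switch to force $e_{12}=uv$ and $e_{13}=xy$ to be the two deleted edges of a single alternating 4-cycle (hence vertex-disjoint), and then derive the contradiction from the $R_2 \to R_3$ 2-switch, which would require the deleted edge $uv$ and the added non-edge $xy$ to share a vertex. Your version merely makes explicit, via the identity $s_1+s_{12}+s_{13}+s_{123}=2$ from \eqref{eq: differences}, the counting that the paper leaves implicit.
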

\begin{proof}
	Suppose to the contrary that $s_A+s_B > 1$ for some sets $A,B$ as described; by Lemma~\ref{lem: vars at most 1} this implies that $s_A = s_B = 1$. By re-indexing the realizations as necessary, we may suppose that $A=\{1,2\}$ and $B=\{1,3\}$.
	
	Now $R_1$ and $R_2$ have an edge $e_{12}$ that does not appear in $R_3$ or $R_4$. Likewise, $R_1$ and $R_3$ have an edge $e_{13}$ that does not appear in $R_2$ or $R_4$; hence $e_{13}$ is distinct from $e_{12}$. The 2-switch transforming $R_1$ to $R_4$ must remove both edges $e_{12}$ and $e_{13}$; since these edges must appear in the corresponding alternating 4-cycle in $R_1$, $e_{12}$ and $e_{13}$ have no vertex in common.
	
	However, consider the 2-switch transforming $R_2$ into $R_3$. The corresponding alternating 4-cycle in $R_2$ must include the edge $e_{12}$ and the non-edge $e_{13}$. This requires that $e_{12}$ and $e_{13}$ share a vertex, which we showed above is not true. The contradiction shows that for any sets $A$ and $B$ satisfying the conditions in this lemma, we have $s_A+s_B \leq 1$.	
\end{proof}

Observe that in each of the first nine rows of the table above we find indices $i,j,k$ such that $s_{ij} = s_{ik}=1$, contradicting Lemma~\ref{lem: No 12,13}. Hence the last row must describe the edges of $R_1,R_2,R_3,R_4$; we have $s_i = 1$, $s_{ij}=0$, and $s_{ijk}=1$ for all distinct $i,j,k \in \{1,2,3,4\}$. 

Since $s_i=1$ for all $i$ and $s_{I}=1$ where $I$ consists of the three elements in $\{1,2,3,4\} \setminus \{i\}$, each realization $R_i$ has exactly one edge $e_i$ that none of the other three realizations has, and exactly one non-edge $f_i$ that all of the other three realizations have. Think now of the 2-switches transforming $R_1$ into each of $R_2,R_3,R_4$. Each of these 2-switches must toggle both the edge $e_1$ and the non-edge $f_1$. It follows that $e_1$ and $f_1$ share a vertex, and taking the union of the vertex sets, edge sets, and non-edge sets of the alternating 4-cycle configurations involved in these three 2-switches results in a configuration $\mathcal{D}_4$ in $R_1$, since no two of the alternating 4-cycles can agree on the fourth vertex while still being distinct from each other. (In fact, the configuration's respective appearances in $R_1,R_2,R_3,R_4$ are the same as those illustrated in Figure~\ref{fig: 4 dials}.) The six vertices involved are the vertices of a dial with respect to $\{R_1,R_2,R_3,R_4\}$ (here the edge $f_i$ is the needle in $R_i$, for each $i$), and we have established the base case in our inductive proof of Theorem~\ref{thm: clique implies dial}.

\subsection{Induction step} \label{subsec: induction step}
Suppose that the conclusion in Theorem~\ref{thm: clique implies dial} holds for cliques of size $k$ in every realization graph, for some $k \geq 4$. In this section we complete the induction by proving that every clique of size $k+1$ in any realization graph corresponds to the existence of a dial with respect to the realizations in the clique.

Let $\mathcal{G}(d)$ be an arbitrary realization graph having a clique of size $k+1$, and let $R_1,\dots,R_{k+1}$ be the vertices of the clique. Applying the induction hypothesis to $R_1,\dots,R_k$, we let $u,v,w_1,\dots,w_k$ be the vertices of the dial $(W,P)$ for these graphs, assuming that $uw_i$ is the needle in $R_i$ for each $i \in \{1,\dots,k\}$. 

If we apply the induction hypothesis to $R_2,\dots,R_{k+1}$, we arrive at a dial $(W',P')$ for these graphs as well. From the first dial we note that only $u$ and $v$ appear in each of the alternating 4-cycles used for 2-switches among $R_2,R_3,R_4$. Since these alternating 4-cycles must appear in the appropriate states of the second dial, the vertices $u$ and $v$ fulfill the same roles in the second dial that they do in the first: $u$ is the vertex common to every needle edge in the second dial's states, and $v$ is the other vertex common to every alternating 4-cycle used for 2-switches among $\{R_2,\dots,R_{k+1}\}$. Similarly, the edges $uw_2,\dots,uw_{k}$ are the needles for the graphs $R_2,\dots,R_k$ in the second dial as well as the first. Hence the symmetric difference of $P$ and $P'$ is \[\{\{u,w_1\}, \{v,w_1\}, \{u,w_{k+1}\},\{v,w_{k+1}\}\},\] where $w_{k+1}$ is the unique vertex in $W' \setminus W$; note that we may assume that $w_{k+1} \neq w_1$, since otherwise $R_1=R_{k+1}$, a contradiction.

From the first dial we see that in each of $R_2,\dots,R_k$, vertex $w_1$ is adjacent to $v$ and not to $u$. The 2-switch changing $R_2$ to $R_{k+1}$ does not change the neighbors of $w_1$, so $vw_1$ is an edge and $uw_1$ is a non-edge in $R_{k+1}$. A similar argument about the vertex $w_{k+1}$ shows that the pair $(W \cup W',P \cup P')$ is a dial for $R_1,\dots,R_{k+1}$, and our proof of Theorem~\ref{thm: clique implies dial} is complete.

\section{Conclusion}

Combining Lemma~\ref{lem: dial implies clique} and Theorem~\ref{thm: clique implies dial}, we have shown the following.

\begin{thm} \label{thm: iff} 
Let $d$ be a degree sequence, and let $R$ be a realization of $d$; also let $n \geq 4$. In the realization graph $\mathcal{G}(d)$ the vertex $R$ belongs to a clique of size $n$ if and only if $R$ contains the configuration $\mathcal{D}_n$.

Furthermore, moving in $\mathcal{G}(d)$ from $R$ to another vertex of the clique corresponds precisely to performing a 2-switch using edges and non-edges of the configuration $\mathcal{D}_n$ in $R$.
\end{thm}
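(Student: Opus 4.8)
The plan is to assemble the theorem directly from Lemma~\ref{lem: dial implies clique} and Theorem~\ref{thm: clique implies dial}, handling the biconditional one direction at a time and then reading off the correspondence clause. For the sufficiency direction I would suppose that $R$ contains the configuration $\mathcal{D}_n$ and simply invoke the second statement of Lemma~\ref{lem: dial implies clique}, which asserts precisely that such an $R$ lies in a clique of size $n$ in $\mathcal{G}(d)$; note that no hypothesis beyond $n \geq 2$ is needed for this half.

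For the necessity direction I would suppose that $R$ belongs to a clique of size $n \geq 4$, label its vertices $R = R_1, R_2, \dots, R_n$, and apply Theorem~\ref{thm: clique implies dial} to obtain a dial $(W,P)$ for this collection. The key point to record is that the induced subgraph on $W$, together with the edges and non-edges catalogued by $P$ as they appear in $R_1$, is exactly an instance of $\mathcal{D}_n$: by the definition of a dial the set $W$ has $n+2$ vertices, $P$ contains the $n$ needle-type pairs $\{u,w_i\}$ together with the $n$ matching pairs $\{v,w_i\}$, and in $R_1$ these realize $n$ edges and $n$ non-edges in the required incidence pattern. Hence $R$ contains $\mathcal{D}_n$, which completes the equivalence.

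For the final \emph{furthermore} clause I would combine the ``Moreover'' portions of the two cited results. Theorem~\ref{thm: clique implies dial} guarantees that the alternating 4-cycle realizing each 2-switch $R_1 \to R_j$ lies inside the dial configuration in $R_1$, while the proof of Lemma~\ref{lem: dial implies clique} exhibits the explicit 2-switch on $[u,w_1 : v,w_j]$ carrying $R_1$ to $R_j$. Reading these together yields the claimed exact correspondence between the edges of $\mathcal{G}(d)$ incident to $R$ within the clique and the 2-switches performed using edges and non-edges of $\mathcal{D}_n$ in $R$.

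Since the substantive work lives entirely in the two prerequisite results, I expect no genuine obstacle here. The only points requiring care are the bookkeeping verification that a dial's configuration in a single realization literally \emph{is} the abstract configuration $\mathcal{D}_n$ (matching the vertex, edge, and non-edge counts and their incidences), and the observation that the hypothesis $n \geq 4$ is inherited solely from Theorem~\ref{thm: clique implies dial}, so it is invoked only for necessity and plays no role in the sufficiency direction.
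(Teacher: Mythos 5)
Your proposal is correct and follows the paper's argument exactly: the paper proves this theorem simply by combining Lemma~\ref{lem: dial implies clique} (for sufficiency) with Theorem~\ref{thm: clique implies dial} (for necessity and the correspondence clause), just as you do. Your added bookkeeping remark that a dial configuration in a single realization is by definition an instance of $\mathcal{D}_n$ is a reasonable explicit check that the paper leaves implicit.
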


In Section~1 we described the seeming potential difficulty in having several labeled realizations be pairwise adjacent in a realization graph. It is perhaps not surprising that Theorem~\ref{thm: iff} shows that this can happen in only one way.

In this section we conclude our results by characterizing the degree sequences $d$ for which $\mathcal{G}(d)$ is a complete graph. It will turn out that there is only ``one way'' in which this can happen as well; however, this claim is subject to our observation in Section 1 that complementary degree sequences have the same realization graphs, and to certain addition operations we must first describe.

To keep our description mostly self-contained, we briefly recall some results from~\cite{Barrus16}. Recall that a \emph{split graph} is a graph whose vertex set may be partitioned into a clique and an independent set. For any split graph, we write the degree sequence as a ``splitted'' sequence $(p_2; p_1)$, where $p_1$ and $p_2$ are respectively the sublists containing degrees of vertices in the independent set and clique. (In our notation $p_2$ appears before $p_1$ because the vertices in the clique have degrees at least as large as those in the independent set; we will assume that the sublists $p_2$ and $p_1$ are each written in nonincreasing order.)

Tyshkevich~\cite{Tyshkevich00} defined a composition of degree sequences in the following way. If $|\pi|$ denotes the length of a list $\pi$ of integers, then for a splitted degree sequence $p=(p_2;p_1)$ and an arbitrary degree sequence $q$, the composition $p \circ q$ is formed by concatenating the following:
\begin{itemize}
    \item[(i)] the terms of $p_2$, each augmented by $|q|$,
    \item[(ii)] the terms of $q$, each augmented by $|p_2|$, and 
    \item[(iii)] the terms of $p_1$.
\end{itemize}
Observe that the resulting terms of $p \circ q$ appear in descending order. Note also that if $P$ and $Q$ are respectively realizations of the degree sequences $p$ and $q$, where the vertex set of $P$ is partitioned into an independent set $V_1$ and a clique $V_2$ in such a way that the vertices in $V_1$ and $V_2$ have degrees listed in $p_1$ and $p_2$, respectively, then $p \circ q$ is the degree sequence of the graph formed by taking the disjoint union of $P$ and $Q$ and adding an edge from each vertex of $Q$ to each vertex in $V_2$. We denote this graph by $(P,V_1,V_2) \circ Q$.

If the degree sequence $q$ in the discussion above is the degree sequence of a split graph, and in the realization $Q$ the vertex set has a partition $W_1,W_2$ into an independent set and clique, then $(P,V_1,V_2) \circ Q$ is a spit graph, and $p \circ q$ may be treated as a splitted sequence $(r_2; r_1)$ with the terms of $r_1,r_2$ corresponding to degrees of vertices in $V_1 \cup W_1$ and in $V_2 \cup W_2$, respectively. With this understanding, the operation $\circ$ is associative for both degree sequences and graphs.

\begin{figure}
    \centering
    \includegraphics[height=3.5cm]{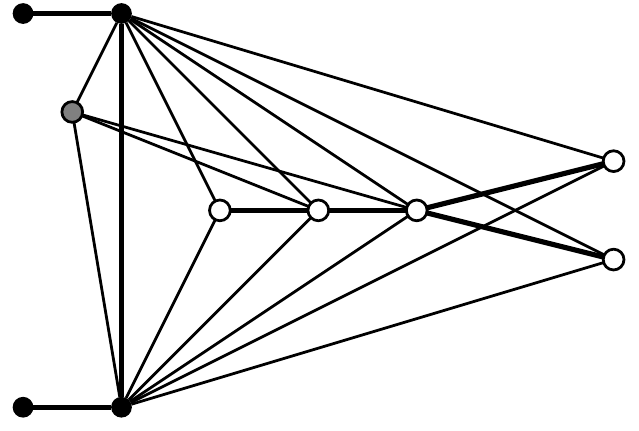}
    \caption{An example of the composition operation $\circ$}
    \label{fig: composition}
\end{figure}
In Figure~\ref{fig: composition} we illustrate the graph $(G_2,A_2,B_2) \circ  (G_1,A_1,B_1) \circ G_0$, where the graphs $G_0, G_1, G_2$ are realizations of the degree sequences $(0)$, $(3,2;1,1,1)$, and $(2,2;1,1)$, respectively. Here the vertices of $G_0$, $G_1$, and $G_2$ are respectively colored gray, white, and black. The sets $A_1,A_2$ are comprised of the vertices of degree $1$ in $G_1,G_2$, respectively, and the sets $B_1,B_2$ respectively contain the other vertices of $G_1,G_2$. Observe that the graph has degree sequence $(8,8,6,5,4,3,3,3,1,1)$, which equals $(2,2;1,1) \circ (3,2;1,1,1) \circ (0)$.

A degree sequence $d$ is \emph{decomposable} if $d = p \circ q$ for a splitted degree sequence $p$ and a degree sequence $q$, each of length at least 1. Otherwise, $d$ is said to be \emph{indecomposable}. In~\cite{Tyshkevich00} and earlier papers referred to therein, Tyshkevich showed the following.

\begin{thm}[\cite{Tyshkevich00}]
Every degree sequence $d$ may be expressed as a composition \begin{equation} \label{eq: decomposition} d=\alpha_1 \circ \dots \circ \alpha_k \circ d_0\end{equation} of indecomposable degree sequences, where each sequence $\alpha_i$ is a splitted degree sequence $(\beta_i; \gamma_i)$, and $d_0$. Moreover, this decomposition is unique.
\end{thm}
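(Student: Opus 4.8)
The plan is to prove the two assertions separately: existence of a decomposition into indecomposable factors, and its uniqueness. Existence is the routine direction, and I would handle it by induction on the length $|d|$. If $d$ is indecomposable there is nothing to prove (take $k=0$ and $d_0=d$). Otherwise $d=p\circ q$ for a splitted $p$ and a sequence $q$ with $|p|,|q|\ge 1$; among all such factorizations choose one in which $|p|$ is as small as possible. I claim this forces $p$ to be indecomposable: if instead $p=p'\circ p''$ with $p'$ splitted and both factors nonempty, then associativity gives $d=p'\circ(p''\circ q)$, and since a composition of splitted sequences is again splitted, $p''\circ q$ is a legitimate right factor, contradicting the minimality of $|p|$. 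Writing $\alpha_1=p$ and applying the inductive hypothesis to the strictly shorter sequence $q$ then yields the desired chain $d=\alpha_1\circ\cdots\circ\alpha_k\circ d_0$.

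For uniqueness I would reduce everything to a single cancellation lemma. First, composition is left-cancellative: because the concatenation defining $p\circ q$ is already nonincreasing, the terms coming from $q$ occupy a predetermined block of positions and are each shifted by the fixed amount $|p_2|$, so $p\circ q=p\circ q'$ forces $|q|=|q'|$ and then $q=q'$ termwise. Granting this, suppose $d=\alpha_1\circ\beta=\alpha_1'\circ\beta'$ are two decompositions into indecomposables, labeled so that $|\alpha_1|\le|\alpha_1'|$. The heart of the argument is a \emph{prefix lemma}: any two factorizations of $d$ with splitted left factors are comparable, in the sense that the shorter left factor is itself a left factor of the longer one, so that $\alpha_1'=\alpha_1\circ\gamma$ for some (possibly empty) splitted $\gamma$. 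Since $\alpha_1'$ is indecomposable, $\gamma$ must be empty, whence $\alpha_1=\alpha_1'$; left-cancellation then gives $\beta=\beta'$, and induction on $|d|$ finishes the proof.

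Establishing the prefix lemma is where the real work lies, and I expect it to be the main obstacle. The idea is to show that the boundary separating the outermost factor $\alpha_1$ from the rest is an invariant of $d$ alone, independent of which factorization produced it. At the level of a realization $(P,V_1,V_2)\circ Q$, the outermost factor corresponds to a distinguished pair $(V_2,V_1)$: the clique part $V_2$ consists of the vertices completely joined to $V(Q)$, the independent part $V_1$ consists of the vertices with no neighbor in $V(Q)$, and $V(Q)$ induces a realization of the remaining sequence. In the sorted order these three pieces occupy, respectively, a top block, a bottom block, and a contiguous middle block, so the full decomposition manifests as a nested ``onion'' on the sorted list of degrees, with $d_0$ at the center. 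I would make this boundary intrinsic by characterizing the admissible cut positions purely numerically, for example showing that a position $k$ is a cut point exactly when the corresponding Erd\H{o}s--Gallai inequality for $d$ holds with equality, a condition visibly depending only on $d$ and on no chosen realization. Indecomposability of $\alpha_1$ then translates into the statement that there is no such tight position strictly interior to the first factor, which pins $\alpha_1$ down uniquely and yields the prefix lemma.
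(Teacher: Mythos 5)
This statement is quoted from Tyshkevich's paper and is not proved in the present article, so there is no in-paper argument to compare against; I can only assess your proposal on its own terms. Your existence argument is essentially sound: induction on $|d|$ with a left factor of minimal length works, though you should justify that when a splitted $p$ decomposes as $p'\circ p''$ the right factor $p''$ inherits a splitting (it does, since an induced subgraph of a split graph is split and the partition restricts), as otherwise the associativity step $d=p'\circ(p''\circ q)$ is not even well-formed. The left-cancellation observation is also fine.

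The genuine gap is the prefix lemma, which you correctly identify as carrying all the weight of uniqueness and then do not prove. Two specific problems: first, a factorization $d=p\circ q$ with $p=(p_2;p_1)$ is located in the sorted sequence by \emph{two} boundary positions (the top block of size $|p_2|$ and the bottom block of size $|p_1|$), together with the choice of splitting of $p$, so a characterization of ``cut points'' by a single Erd\H{o}s--Gallai index $k$ cannot by itself pin down a factorization; at best $\Delta_k(d)=0$ detects $|p_2|$, and the size of the independent part and the splitting still have to be recovered and shown to be forced. Second, even granting a numerical characterization of admissible cuts, the comparability of any two factorizations (that the cuts are nested rather than overlapping or crossing) is exactly the nontrivial content of Tyshkevich's theorem and is asserted rather than argued; this is usually established at the level of realizations by analyzing the poset of pairs $(A,B)$ where $A$ is completely joined to, and $B$ completely nonadjacent to, the remaining vertices, and showing the relevant pairs form a chain. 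Finally, note that the theorem asserts uniqueness of the $\alpha_i$ \emph{as splitted sequences} $(\beta_i;\gamma_i)$, so your cancellation step must also rule out two distinct splittings of the same underlying factor producing the same composition. As written, the proposal is a plausible plan whose decisive step remains open.
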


We refer to such an expression \eqref{eq: decomposition} as the \emph{Tyshkevich decomposition} of $d$.

The Tyshkevich decomposition gives us some understanding of the realization graph $\mathcal{G}(d)$. Let $G \Box H$ denote the Cartesian product of arbitrary graphs $G$ and $H$.

\begin{thm}[\cite{Barrus16}] \label{thm: Cartesian}
    If $d$ is a degree sequence having \[d=\alpha_1 \circ \dots \circ \alpha_k \circ d_0\] as its Tyshkevich decomposition, then \[\mathcal{G}(d) = \mathcal{G}(\alpha_1) \Box \cdots \Box \mathcal{G}(\alpha_k) \Box \mathcal{G}(d_0).\]
\end{thm}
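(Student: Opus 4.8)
The plan is to reduce the multi-factor statement to a single two-factor identity and then induct. Since both the composition $\circ$ and the Cartesian product $\Box$ are associative, it suffices to prove that $\mathcal{G}(p \circ q) = \mathcal{G}(p) \Box \mathcal{G}(q)$ whenever $p$ is a splitted degree sequence and $q$ is an arbitrary degree sequence. Writing $d = \alpha_1 \circ (\alpha_2 \circ \cdots \circ d_0)$ and applying this identity together with the inductive hypothesis on the shorter composition $\alpha_2 \circ \cdots \circ d_0$ then yields the general result, so I would focus entirely on the two-factor claim.

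The heart of the argument, and what I expect to be the main obstacle, is a rigidity lemma asserting that the composition structure is forced in \emph{every} realization, not merely in the ``canonical'' one built in Figure~\ref{fig: composition}. Concretely, I would show that for any realization $H$ of $p \circ q$, the vertex set partitions canonically into $V_1 \cup V_2 \cup V(Q)$, where these classes are read off from the positions and degrees dictated by $p \circ q$, in such a way that $V_2$ is complete to $V(Q)$ and $V_1$ is anticomplete to $V(Q)$ in $H$. In other words, the bipartite adjacency between the $p$-block and the $q$-block is identical across all realizations; only the adjacencies internal to the $p$-block (yielding a realization $P$ of $p$) and internal to the $q$-block (yielding a realization $Q$ of $q$) may vary, so that $H = (P,V_1,V_2) \circ Q$. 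This invariance is essentially Tyshkevich's canonical-decomposition property, and proving it requires a degree/extremal argument: the vertices assigned to $V_2$ carry enough excess degree (boosted by $|q|$) that they are forced to be adjacent to every vertex of the lower block, while the $V_1$ vertices carry too little degree to reach into $V(Q)$ at all. Establishing this forced separation cleanly is the delicate step.

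Granting the rigidity lemma, the vertex correspondence is immediate: because the cross-block adjacencies are constant, $H$ is completely determined by the pair of induced subgraphs $(P,Q)$, giving a bijection $V(\mathcal{G}(p \circ q)) \to V(\mathcal{G}(p)) \times V(\mathcal{G}(q))$ that matches the vertex set of $\mathcal{G}(p) \Box \mathcal{G}(q)$. For the edges I would argue that every 2-switch is confined to a single block. A 2-switch on an alternating 4-cycle $[a,b:c,d]$ toggles exactly the four pairs $\{a,b\}, \{c,d\}, \{a,d\}, \{b,c\}$; since cross-block adjacencies cannot change by rigidity, each of these four toggled pairs must lie within a single block. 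But these four pairs form a 4-cycle $a,b,c,d$ on the four vertices, so they link all of $a,b,c,d$ into one block, forcing the alternating 4-cycle to live entirely inside either the $p$-block or the $q$-block. Hence adjacent realizations differ in exactly one coordinate, and there by a single 2-switch. Conversely, any 2-switch performed inside one block leaves every cross-block adjacency untouched and is therefore a legitimate 2-switch of $p \circ q$. This is precisely the adjacency rule of $\mathcal{G}(p) \Box \mathcal{G}(q)$, which completes the two-factor case and, via the opening reduction, the theorem.
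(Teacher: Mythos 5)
First, a point of reference: the paper you were given does not prove this theorem at all --- it is imported from \cite{Barrus16} with a citation and no argument --- so there is no in-paper proof to compare against. Measured instead against the argument in the cited source, your outline follows essentially the same route: reduce by associativity and induction to the two-factor identity $\mathcal{G}(p \circ q) = \mathcal{G}(p) \Box \mathcal{G}(q)$, establish that every realization of $p \circ q$ carries the canonical structure $(P,V_1,V_2)\circ Q$ with a fixed cross-block adjacency pattern, and then observe that the four pairs toggled by a 2-switch on $[u,v:w,x]$ form a 4-cycle on those vertices and hence cannot straddle the two blocks. That last observation is correct and is exactly the right mechanism for matching the adjacency rule of the Cartesian product in both directions.

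The one load-bearing step you invoke rather than prove is the rigidity lemma, and you should be precise about what you are citing. The version of Tyshkevich's theorem stated in this paper concerns degree sequences only (existence and uniqueness of the decomposition of $d$); what you need is the strictly stronger graph-level fact from \cite{Tyshkevich00} that \emph{every} realization of $p \circ q$ splits as $(P,V_1,V_2)\circ Q$ with the \emph{same} tripartition of the vertex set. Citing that is legitimate, but your sketch of it (``$V_2$ carries enough excess degree to be forced complete to $V(Q)$'') is a placeholder: the actual argument is an Erd\H{o}s--Gallai-type equality argument, and the case where a vertex destined for $V_1$ and a vertex destined for $V(Q)$ have equal degree in $p\circ q$ is exactly where ``read the partition off the degrees'' needs care. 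Two smaller loose ends: after a 2-switch confined to the $p$-block you should note that the new induced subgraph is again a split realization of $p$ with independent set $V_1$ and clique $V_2$ (so the composition is still defined); this follows by applying the rigidity lemma to the resulting realization, not from the 2-switch itself. And in the converse direction you should check that a 2-switch performed inside $Q$ or inside $P$ genuinely is an alternating 4-cycle of $H$, which is immediate since $P$ and $Q$ are induced subgraphs. With the rigidity lemma properly attributed, the proposal is sound.
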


Since a Cartesian product $G \Box H$ can be a complete graph if and only if one of $G,H$ is a complete graph and the other has a single vertex, it follows from Theorem~\ref{thm: Cartesian} that if $\mathcal{G}(d)$ is a complete graph, then all but possibly one of $\alpha_1,\dots,\alpha_k,d_0$ must have a single labeled realization.

Degree sequences having a unique labeled realization are known as \emph{threshold sequences}, and their realizations are \emph{threshold graphs}. (See~\cite{MahadevPeled95} for a book-length survey on properties of these graphs.) It is known that a degree sequence $d$ is a threshold sequence if and only if in the Tyshkevich decomposition of $d$, each indecomposable sequence has a single term. In this case each indecomposable sequence has the form (0) or (0;) or (;0). (See~\cite{Barrus12} for details.)

It follows that if $\mathcal{G}(d)$ is a complete graph, then we may write $d=t \circ \alpha \circ t'$, where both $t,t'$ are either empty (i.e., omitted) or threshold sequences, and $\alpha$ is an indecomposable degree sequence for which $\mathcal{G}(\alpha)$ is a complete graph. We now characterize such sequences $\alpha$.

Suppose that $\alpha$ is a degree sequence for which $\mathcal{G}(\alpha)$ is isomorphic to $K_n$, and let $R_1,\dots,R_n$ be the labeled realizations of $\alpha$. Since these realizations belongs to a clique of size $n$, Theorem~\ref{thm: clique implies dial} implies that a dial exists for these graphs. Adopting the same notation as in Section~\ref{sec: dials}, we let $u$ (respectively, $v$) be the vertex belonging to $n-1$ non-edges (respectively, $n-1$ edges) in each dial configuration; we let $w_1,\dots,w_n$ be the other dial vertices, labeled so that $uw_i$ is an edge in $R_i$ for each $i \in \{1,\dots,n\}$.

We claim that the graphs $R_i$ have no vertex other than those in $\{u,v,w_1,\dots,w_n\}$. Note that the alternating 4-cycles formed by the edges and non-edges of a dial configuration in any realization $R_i$ are sufficient to provide the 2-switches transforming $R_i$ into every other realization among $R_1,\dots,R_n$. Suppose now that $x$ is a vertex of $R_i$ not in $\{u,v,w_1,\dots,w_n\}$. Since the degree sequence $\alpha$ is indecomposable, it is known (see~\cite[Lemma 3.5]{Barrus12}) that $x$ belongs to an alternating 4-cycle. However, a 2-switch performed in $R_i$ on an alternating 4-cycle using $x$ would result in a realization of $\alpha$ not equal to any of $R_1,\dots,R_n$, contradicting the assumption that $\mathcal{G}(\alpha)$ has just these $n$ vertices. 

The need to prevent other ``unauthorized'' 2-switches gives us further restrictions. Fix $j \in \{1,\dots,n\}$. Suppose first that $u$ and $v$ are adjacent in $R_j$, and $i$ is an element of $\{1,\dots,n\}$ other than $j$. Note that if $w_i$ is adjacent to $w_j$ in $R_j$, then $[u,v:w_i,w_j]$ is an alternating 4-cycle in $R_j$, and performing the associated 2-switch in $R_j$ results in a realization in which $w_j$ is adjacent to both $u$ and $v$. This is a contradiction, since $R_1,\dots,R_n$ are the only realizations of $\alpha$. Hence for no $i \in \{1,\dots,n\}$ is $w_i$ adjacent to $w_j$. Moreover, since no 2-switch using edges and non-edges of the dial configuration changes the adjacency relationships among vertices in $\{w_1,\dots,w_n\}$, by varying $j$ in the argument above we conclude that $\{w_1,\dots,w_n\}$ must be an independent set. At this point the edges of each realization have been completely determined, and we verify that $\alpha$ is the degree sequence $\left(n,2,1^{(n)}\right)$. 

A similar argument shows that if $u$ and $v$ are not adjacent in $R_j$, then the vertices $w_1,\dots,w_n$ must be pairwise adjacent if $\mathcal{G}(d)$ is isomorphic to $K_n$. Here again the edges of $R_j$ and all other realizations have been completely determined; in this case $\alpha$ is the degree sequence $\left(n^{(n)},n-1,1\right)$.

A straightforward verification shows that both $\left(n,2,1^{(n)}\right)$ and $\left(n^{(n)},n-1,1\right)$ have exactly $n$ realizations, each of which is isomorphic to the appropriate graph shown in Figure~\ref{fig: G is Kn}, and the degree sequences have $K_n$ as their realization graph.
\begin{figure}
    \centering
    \includegraphics[height=1.4in]{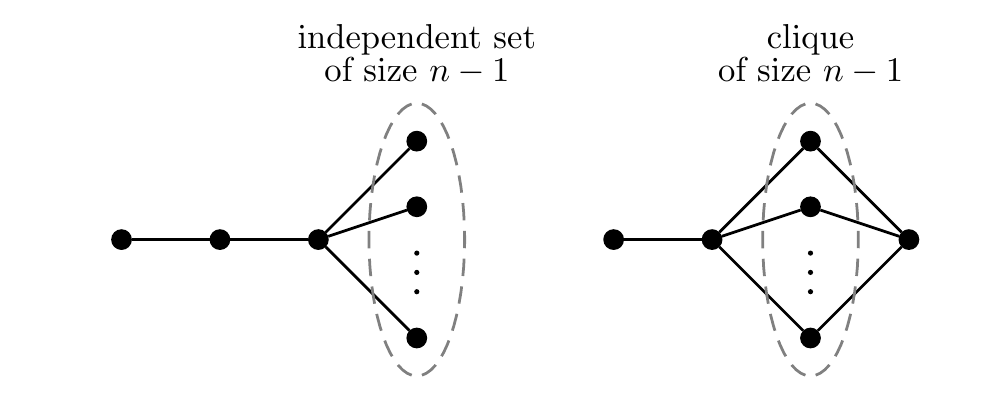}
    \caption{Graphs whose realization graphs are complete graphs}
    \label{fig: G is Kn}
\end{figure}

The discussion above proves our final result.

\begin{thm} \label{thm: Kn iff} 
For any $n \geq 4$ and any degree sequence $d$, the realization graph $\mathcal{G}(d)$ is a complete graph of order $n$ if and only if $d=t \circ \alpha \circ t'$, where each of $t,t'$ is either empty (i.e., omitted) or a threshold sequence, and $\alpha$ is $\left(n,2,1^{(n)}\right)$ or $\left(n^{(n)},n-1,1\right)$.
\end{thm}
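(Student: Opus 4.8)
The plan is to prove both directions by reducing everything to the case of an \emph{indecomposable} factor and then lifting via the Tyshkevich decomposition together with the Cartesian-product structure of Theorem~\ref{thm: Cartesian}. For the forward direction, I would start from a degree sequence $d$ with $\mathcal{G}(d) \cong K_n$ and write its Tyshkevich decomposition $d = \alpha_1 \circ \cdots \circ \alpha_k \circ d_0$, so that $\mathcal{G}(d)$ factors as the Cartesian product $\mathcal{G}(\alpha_1) \Box \cdots \Box \mathcal{G}(\alpha_k) \Box \mathcal{G}(d_0)$. Since a Cartesian product is complete precisely when all but one factor is a single vertex and the remaining factor is itself complete, at most one factor can have more than one realization. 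The single-realization factors are exactly the threshold factors, and because threshold sequences are closed under $\circ$, the factors preceding and following the nontrivial one collect into threshold sequences $t$ and $t'$ (each possibly empty). This yields $d = t \circ \alpha \circ t'$ with $\alpha$ indecomposable and $\mathcal{G}(\alpha) \cong K_n$, reducing the problem to identifying $\alpha$.

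The heart of the argument is to show that an indecomposable $\alpha$ with $\mathcal{G}(\alpha) \cong K_n$ must be one of the two listed sequences. Because $n \geq 4$, Theorem~\ref{thm: clique implies dial} supplies a dial $(W,P)$ with vertices $u,v,w_1,\dots,w_n$ governing all $n$ realizations $R_1,\dots,R_n$. The first key step is to rule out any vertex outside $W$: since $\alpha$ is indecomposable, every vertex lies on an alternating 4-cycle (by \cite[Lemma 3.5]{Barrus12}), so a 2-switch on a 4-cycle through an external vertex would manufacture an $(n+1)$st realization, contradicting $\mathcal{G}(\alpha) \cong K_n$. The second key step is a case split on whether $u$ and $v$ are adjacent in a fixed realization $R_j$. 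If $u \sim v$, I would argue that no two interior vertices $w_i,w_j$ can be adjacent, since otherwise $[u,v:w_i,w_j]$ is an alternating 4-cycle whose 2-switch produces a realization placing both $u$ and $v$ adjacent to $w_j$ — a graph not among $R_1,\dots,R_n$. This forces $\{w_1,\dots,w_n\}$ to be independent, which together with the dial conditions pins down every edge and gives $\alpha = \left(n,2,1^{(n)}\right)$.

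The non-adjacent case $u \not\sim v$ I would dispatch by the complement invariance of realization graphs observed in Section~1: complementing sends this configuration to the adjacent case and turns the independent set $\{w_i\}$ into a clique, and a direct degree count confirms that the complement of $\left(n,2,1^{(n)}\right)$ is exactly $\left(n^{(n)},n-1,1\right)$. Hence $\alpha$ is one of the two claimed sequences. For the converse, I would verify directly that each of $\left(n,2,1^{(n)}\right)$ and $\left(n^{(n)},n-1,1\right)$ has exactly $n$ labeled realizations, each isomorphic to the corresponding graph in Figure~\ref{fig: G is Kn}, and that these form a clique, for instance by exhibiting the configuration $\mathcal{D}_n$ and applying Lemma~\ref{lem: dial implies clique}; composing with threshold factors $t,t'$ only introduces single-vertex Cartesian factors, leaving the realization graph complete. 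I expect the main obstacle to be the indecomposable case, specifically the need to exclude every ``unauthorized'' 2-switch: one must confirm that, apart from the needle swaps, any other alternating 4-cycle in a realization would leave the clique, and the external-vertex step depends essentially on the structural fact that indecomposability forces each vertex onto an alternating 4-cycle.
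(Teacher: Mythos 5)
Your proposal is correct and follows essentially the same route as the paper: reduce via the Tyshkevich decomposition and Theorem~\ref{thm: Cartesian} to an indecomposable $\alpha$ with $\mathcal{G}(\alpha)\cong K_n$, invoke Theorem~\ref{thm: clique implies dial} to get a dial, exclude vertices outside the dial using \cite[Lemma 3.5]{Barrus12}, and split on whether $u\sim v$ to force $\{w_1,\dots,w_n\}$ to be independent or a clique, pinning down $\alpha$ as $\left(n,2,1^{(n)}\right)$ or $\left(n^{(n)},n-1,1\right)$. The only (harmless) divergence is that you dispatch the $u\not\sim v$ case by complement invariance rather than repeating the argument directly, which is exactly the ``similar argument'' the paper gestures at.
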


\section*{Acknowledgments}
The authors wish to thank the anonymous referees for thoughtful comments that have improved the presentation of this paper.

\end{document}